\documentclass[12pt]{amsart}
\usepackage{graphicx}

\theoremstyle{theorem}%
\newtheorem{theorem}{Theorem}[section]

\newtheorem{lemma}[theorem]{Lemma}
\theoremstyle{definition}%

\begin{document}

\title[Hamiltonian Cycles in Multisigned Graphs]{Hamiltonian Cycles in Signed and Multisigned Complete Graphs}

\author{Xiyong Yan}

\address{89 Park Ave, Apt 29,  Binghamton, NY,  USA,  13903.}

\begin{abstract}
A signed complete graph on $n$ vertices contains both positive and negative Hamiltonian cycles if and only if it also contains both positive and negative triangles. Otherwise, all Hamiltonian cycles are negative if and only if all triangles are negative and $n$ is odd, while all Hamiltonian cycles are positive if and only if all triangles are negative and $n$ is even, or all triangles are positive. Extending these results to multisigned complete graphs, we prove that such a graph contains at least two Hamiltonian cycles with different multisigns if and only if it contains at least two triangles with different multisigns.
\end{abstract}

\maketitle

\section{Introduction}
 Our research question originates from the paper ``Negative (and Positive) Circles in Signed Graphs: A Problem Collection'' \cite{2}. In it, the author poses the question: Assume a signed graph has a Hamiltonian cycle and is unbalanced. Is there a negative Hamiltonian cycle? A positive one? In our study, we narrow the focus to signed complete graphs and expand it to multisigned complete graphs. 

We establish the relationship between the multisigns of Hamiltonian cycles and the multisigns of triangles in a multisigned complete graph. To achieve this, we utilize the Hourglass Lemma and the Consistent Triangle Multisign Lemma. In the special case of signs, this shows the complete relationship between the signs of Hamiltonian cycles and the signs of triangles in a signed complete graph.

\section{Preliminaries}
 A \textbf{signed graph} $\Sigma=(\Gamma,\sigma)$ consists of an underlying graph $\Gamma=(V, E)$, where $V$ denotes the set of vertices and $E$ the set of edges, and a sign function $\sigma$ that assigns either $+$ or $-$ to each edge in $\Gamma$. 
 
A \textbf{multisigned graph} \( \Sigma = (\Gamma, \sigma, G) \) consists of the following components: 
The base graph is \( \Gamma = (V, E) \). 
The group $G:=\{-1,+1\}^m$ with componentwise multiplication. 
 The identity element of $G$ is denoted by $e$.
The multisign function is \( \sigma: E \to G \) . 

The \textbf{multisign} of a multisigned cycle $P$, denoted by $\sigma(P)$,  is defined as the product of the multisigns of all edges of $P$.   $\Sigma$ is called \textbf{balanced} if every cycle has multisign $e$.

\section{Hourglass Lemma: Sign Consistency of Hamiltonian Cycles}
\begin{lemma} [Hourglass Lemma]\label{hour}
    Let $v_1,v_2,\dots,v_n$ be vertices of a multisigned complete graph $\Sigma= (K_n, \sigma, G)$. Two  Hamiltonian cycles $v_1v_2\dots v_n$ and $v_1v_2\dots v_iv_jv_{j-1}\dots v_{i+1}v_{j+1}\dots v_n$, have the same multisign if and only if  $\sigma(v_iv_{i+1})\sigma(v_jv_{j+1})=\sigma(v_iv_j)\sigma(v_{i+1}v_{j+1})$, where $j>i$ and $v_i,v_{i+1},v_j,v_{j+1}$ are distinct for some $i,j\in\{1,2,\dots,n\}$.
\end{lemma}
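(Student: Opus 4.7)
The plan is to prove the lemma by direct comparison of the edge sets of the two Hamiltonian cycles and exploiting the fact that $G=\{-1,+1\}^m$ is an elementary abelian 2-group, in which every element is its own inverse.

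First, I would write out the edge lists of both cycles side by side. Call the original cycle $C_1=v_1v_2\cdots v_n$ and the modified cycle $C_2=v_1v_2\cdots v_iv_jv_{j-1}\cdots v_{i+1}v_{j+1}\cdots v_n$. The step I want to emphasize is that reversing the segment from $v_{i+1}$ to $v_j$ does not change the set of edges in that segment: the edges $v_{i+1}v_{i+2},v_{i+2}v_{i+3},\ldots,v_{j-1}v_j$ of $C_1$ are exactly the edges $v_jv_{j-1},v_{j-1}v_{j-2},\ldots,v_{i+2}v_{i+1}$ of $C_2$. Similarly, every edge outside the interval $[i,j+1]$ is identical in both cycles. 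The symmetric difference of the two edge sets therefore consists of exactly four edges: $v_iv_{i+1}$ and $v_jv_{j+1}$ belong only to $C_1$, while $v_iv_j$ and $v_{i+1}v_{j+1}$ belong only to $C_2$. (The hypothesis that $v_i,v_{i+1},v_j,v_{j+1}$ are distinct is what guarantees that these four edges are genuine, distinct edges of $K_n$.)

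Next, I would multiply $\sigma(C_1)\sigma(C_2)$ in $G$. All common edges appear twice and, since $g^2=e$ for every $g\in G$, they cancel. What remains is
\[
\sigma(C_1)\,\sigma(C_2)=\sigma(v_iv_{i+1})\,\sigma(v_jv_{j+1})\,\sigma(v_iv_j)\,\sigma(v_{i+1}v_{j+1}).
\]
Because every element of $G$ is its own inverse, $\sigma(C_1)=\sigma(C_2)$ is equivalent to $\sigma(C_1)\sigma(C_2)=e$, which is equivalent to
\[
\sigma(v_iv_{i+1})\,\sigma(v_jv_{j+1})=\sigma(v_iv_j)\,\sigma(v_{i+1}v_{j+1}),
\]
proving both directions simultaneously.

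The argument has no real obstacle, so the main thing to get right is the bookkeeping: I must carefully verify the edge-by-edge comparison, in particular checking the boundary edges at $v_i,v_{i+1}$ and at $v_j,v_{j+1}$ where the rerouting happens, and confirm that the "reversed" interior edges are literally the same unordered edges. The rest is a one-line algebraic manipulation in the group $G$.
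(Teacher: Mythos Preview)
Your proof is correct and follows essentially the same approach as the paper: compute the product $\sigma(C_1)\sigma(C_2)$, observe that all shared edges cancel (since $g^2=e$ in $G$), and conclude that only the four ``hourglass'' edges $v_iv_{i+1},v_jv_{j+1},v_iv_j,v_{i+1}v_{j+1}$ survive. Your framing in terms of the symmetric difference of edge sets is slightly more explicit than the paper's direct expansion, but the argument is the same.
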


\begin{figure}[h]
    \includegraphics[width=0.8\linewidth]{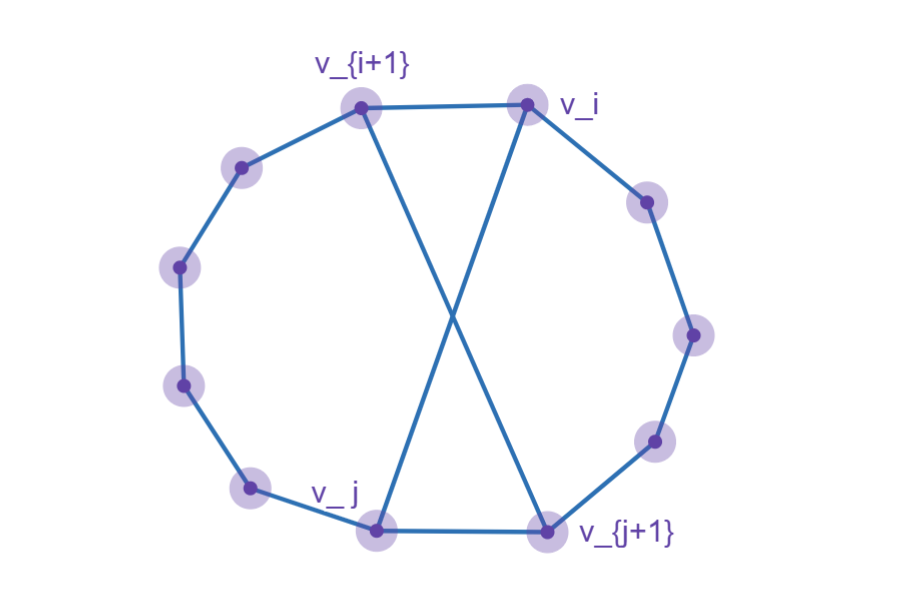}
    \caption{Illustration of the Hourglass Shape used in the proof of Lemma \ref{hour}.}
\end{figure}

\begin{proof}  Consider two  Hamiltonian cycles $v_1v_2\dots v_iv_jv_{j-1}\dots v_{i+1}v_{j+1}\dots v_n$ and $v_1v_2\dots v_n$, where $j>i$ and $v_i,v_{i+1},v_j,v_{j+1}$ are distinct. Let
$$ A=\sigma(v_1v_2\dots v_n)\sigma(v_1v_2\dots v_iv_jv_{j-1}\dots v_{i+1}v_{j+1}\dots .v_n).$$
Break each multisign of the Hamiltonian cycle into the product of the multisigns of its edges. We get
\begin{align*}
    A=&\sigma(v_1v_2)\sigma(v_2v_3)\dots\sigma(v_nv_1) \cdot \sigma(v_1v_2)\sigma(v_2v_3)\dots\sigma(v_{i-1}v_i)\sigma(v_iv_j) \\
    &\cdot \sigma(v_jv_{j-1})\dots\sigma(v_{i+2}v_{i+1})\sigma(v_{i+1}v_{j+1})\sigma(v_{j+1}v_{j+2})\dots\sigma(v_nv_1) .
\end{align*}
That is, $$A=\sigma(v_iv_{i+1})\sigma(v_jv_{j+1})\sigma(v_iv_j)\sigma(v_{i+1}v_{j+1}).$$
Therefore, if the two Hamiltonian cycles have the same multisign, 
$A$ has a multisign $e$. This implies
$$\sigma(v_iv_{i+1})\sigma(v_jv_{j+1})=\sigma(v_iv_j)\sigma(v_{i+1}v_{j+1}).$$
Conversely, if the two Hamiltonian cycles have different multisigns, the above equation does not hold. As required. 
\end{proof}

\section{Characterization of Unbalanced Complete Graphs and Consistent Triangle Multisign Lemma}

\begin{lemma}\label{thm2} 
Let $P$ be a cycle in a multisigned complete graph with vertices $v_1,v_2,\dots,v_k$. After adding edges $v_1v_3,v_1v_4,\dots,v_1v_{k-1}$ to $P$, decompose the resulting graph into triangles $\triangle v_1v_2v_{3},\triangle v_1v_3v_{4},\dots,\triangle v_1v_{k-1}v_{k}.$ Then the multisign of $P$ is equal to the product of the multisigns of all these triangles.
\end{lemma}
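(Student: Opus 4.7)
The plan is to prove the lemma by a direct multiplication argument: expand each triangle's multisign as a product of three edge multisigns, then multiply all the triangle multisigns together and track the multiplicity of each edge in the resulting product. The key observation is that the group $G = \{-1,+1\}^m$ is an elementary abelian $2$-group, so every element is its own inverse and any edge that appears an even number of times contributes $e$ to the product.

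Concretely, I would enumerate the triangles as $\triangle v_1 v_i v_{i+1}$ for $i = 2, 3, \ldots, k-1$, and classify the edges of each triangle into three types: the cycle edge $v_i v_{i+1}$, and the two ``spoke'' edges $v_1 v_i$ and $v_1 v_{i+1}$. Summing over all triangles, each cycle edge $v_i v_{i+1}$ (for $1 \le i \le k-1$) and the closing edge $v_k v_1$ appear in exactly one triangle, so each contributes once. Each interior spoke $v_1 v_i$ for $3 \le i \le k-1$ appears in exactly two consecutive triangles, namely $\triangle v_1 v_{i-1} v_i$ and $\triangle v_1 v_i v_{i+1}$, so its multisign appears squared and equals $e$. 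The two boundary spokes $v_1 v_2$ and $v_1 v_k$ are actually the first and last cycle edges, not added diagonals, and they too appear exactly once. After cancellation, the product of all triangle multisigns is precisely the product of the cycle-edge multisigns, which is $\sigma(P)$.

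The main obstacle, such as it is, is purely bookkeeping: one has to check the endpoints carefully so as not to miscount $v_1 v_2$ or $v_1 v_k$ as diagonals, and to verify that every added diagonal is shared between exactly two consecutive triangles in the fan decomposition. For a cleaner write-up one could instead proceed by induction on $k$: the base case $k = 3$ is immediate, and in the inductive step one peels off the triangle $\triangle v_1 v_{k-1} v_k$ from the cycle $v_1 v_2 \cdots v_k$, multiplies by its multisign, and observes that the shared edge $v_1 v_{k-1}$ squares to $e$, leaving the cycle $v_1 v_2 \cdots v_{k-1}$ to which the inductive hypothesis applies.
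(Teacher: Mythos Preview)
Your argument is correct: the fan triangles' multisigns, when multiplied, have each diagonal $v_1v_i$ (for $3\le i\le k-1$) appearing exactly twice and hence cancelling in the elementary abelian $2$-group $G$, leaving precisely the product of the cycle edges. The bookkeeping you flag---that $v_1v_2$ and $v_1v_k$ are cycle edges rather than diagonals, each appearing once---is handled correctly, and the inductive alternative you sketch is equally valid.

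The paper itself states this lemma without proof, so there is nothing to compare against; your direct-multiplication argument (or the induction) would serve as the missing justification.
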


\begin{lemma}\label{thm4}  
A multisigned complete graph $\Sigma$ is unbalanced if and only if $\Sigma$ contains a triangle with a non-identity multisign.
\end{lemma}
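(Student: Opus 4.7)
The plan is to prove the two directions separately, with the nontrivial direction following almost immediately from Lemma \ref{thm2}.

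For the easy direction, if $\Sigma$ contains a triangle $T$ with multisign $\sigma(T)\neq e$, then $T$ itself is a cycle in $\Sigma$ with non-identity multisign, so by definition $\Sigma$ is unbalanced. No further work is needed here.

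For the reverse direction, suppose $\Sigma$ is unbalanced. Then by definition there exists some cycle $P$ with $\sigma(P)\neq e$. Let the vertices of $P$ be $v_1,v_2,\dots,v_k$. Because $\Sigma$ is a \emph{complete} multisigned graph, all the chords $v_1v_3, v_1v_4,\dots, v_1v_{k-1}$ exist in $\Sigma$, so the triangles $\triangle v_1v_2v_3,\triangle v_1v_3v_4,\dots,\triangle v_1v_{k-1}v_k$ produced by the fan decomposition of Lemma \ref{thm2} are all genuine triangles of $\Sigma$. Applying Lemma \ref{thm2} gives
\[
\sigma(P)=\prod_{i=2}^{k-1}\sigma(\triangle v_1v_iv_{i+1}).
\]
If every triangle of $\Sigma$ had multisign $e$, then in particular every factor on the right side would equal $e$, forcing $\sigma(P)=e$, contrary to the choice of $P$. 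Hence at least one triangle in this decomposition has non-identity multisign, and we are done.

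The proof is essentially immediate once Lemma \ref{thm2} is available, so there is no real obstacle; the only point requiring a moment of care is confirming that the fan decomposition used in Lemma \ref{thm2} is legitimately performed inside $\Sigma$, which is exactly where the completeness hypothesis enters.
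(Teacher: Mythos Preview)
Your proof is correct and follows essentially the same approach as the paper: both directions use Lemma~\ref{thm2} to link the multisign of an arbitrary cycle to a product of triangle multisigns, with the converse handled by the trivial observation that a non-identity triangle is already an unbalanced cycle. The paper phrases the forward direction as a contradiction (if all triangles had multisign $e$ then every cycle would), whereas you exhibit a specific unbalanced cycle and extract a bad triangle from its fan decomposition, but this is the same argument.
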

\begin{proof} Consider a multisigned complete graph $\Sigma$ that is unbalanced. Assume, for contradiction, that all triangles have multisign $e$. By Lemma \ref{thm2}, all cycles in $\Sigma$ would then have multisign $e$. This implies that $\Sigma$ is balanced, which contradicts the assumption that $\Sigma$ is unbalanced. Thus, $\Sigma$ contains a triangle with a non-identity multisign.

Conversely, suppose $\Sigma$ contains a  triangle with a non-identity multisign. Then by definition, $\Sigma$ is unbalanced. 
\end{proof}

\begin{lemma}\label{cor1}
    If all the Hamiltonian cycles have the same multisign in a multisigned complete graph $\Sigma$, where $n \geq 4$ , then all triangles  have the same multisign.
\end{lemma}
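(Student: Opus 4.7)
The plan is to derive, from the uniform multisign of all Hamiltonian cycles, a constraint on certain 4-cycles via the Hourglass Lemma, and then to translate that constraint into the equality of multisigns of edge-sharing triangles through Lemma \ref{thm2}. A short transitivity argument then extends equality across all triangles.

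First, I would invoke the Hourglass Lemma \ref{hour} to every admissible swap in every Hamiltonian cycle. Since all Hamiltonian cycles of $\Sigma$ share a single multisign, the Hourglass identity $\sigma(v_iv_{i+1})\sigma(v_jv_{j+1})=\sigma(v_iv_j)\sigma(v_{i+1}v_{j+1})$ must hold in every such configuration. Because every element of $G=\{-1,+1\}^m$ squares to $e$, this identity is equivalent to the 4-cycle $v_i\,v_{i+1}\,v_{j+1}\,v_j$ having multisign $e$. Since $K_n$ is complete and $n\geq 4$, every quadruple of distinct vertices $\{a,b,c,d\}$ can be placed as $(v_i,v_{i+1},v_j,v_{j+1})$ in some Hamiltonian cycle (for $n=4$ the cycle is simply $a,b,c,d$; for $n\geq 5$, interpolate the remaining vertices between $b$ and $c$). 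Hence the 4-cycle $a\,b\,d\,c$ is balanced for every choice of four distinct vertices.

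Next, I would deduce triangle equality from this. Given two triangles $\triangle xyz$ and $\triangle xyw$ sharing the edge $xy$, apply the previous step to the four distinct vertices $\{x,z,y,w\}$: the 4-cycle $x\,z\,y\,w$ has multisign $e$, and its decomposition along the diagonal $xy$ produced by Lemma \ref{thm2} consists of exactly $\triangle xzy$ and $\triangle xyw$. Thus $\sigma(\triangle xyz)\,\sigma(\triangle xyw)=e$, and since $G$ is 2-torsion, $\sigma(\triangle xyz)=\sigma(\triangle xyw)$.

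Finally, I would conclude by transitivity: for $n\geq 4$ the relation ``shares an edge'' on the set of triangles of $K_n$ generates a connected graph (one can pass from any $\triangle abc$ to any $\triangle def$ by swapping a single vertex at a time, always through a complete graph's triangles), so constant multisign on edge-sharing pairs forces constant multisign everywhere. The only real subtlety is the first step, namely verifying that every 4-element vertex subset admits the Hourglass placement; this is routine once $K_n$ is complete and $n\geq 4$, and it is precisely where the hypothesis on $n$ is used.
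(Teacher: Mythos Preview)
Your argument is correct and follows essentially the same route as the paper: apply the Hourglass Lemma to a pair of Hamiltonian cycles that agree except on four consecutive vertices, obtain an edge identity among those four vertices, convert it into equality of the two edge-sharing triangles, and finish by transitivity through a chain of shared edges. The only cosmetic difference is that you package the Hourglass output as ``every $4$-cycle is balanced'' and then invoke Lemma~\ref{thm2}, whereas the paper multiplies the Hourglass identity directly by the shared-edge sign $\sigma(lq)$ to read off $\sigma(\triangle lmq)=\sigma(\triangle lpq)$ in one line.
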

\begin{figure}[h]
    \centering
    \includegraphics[width=0.8\linewidth]{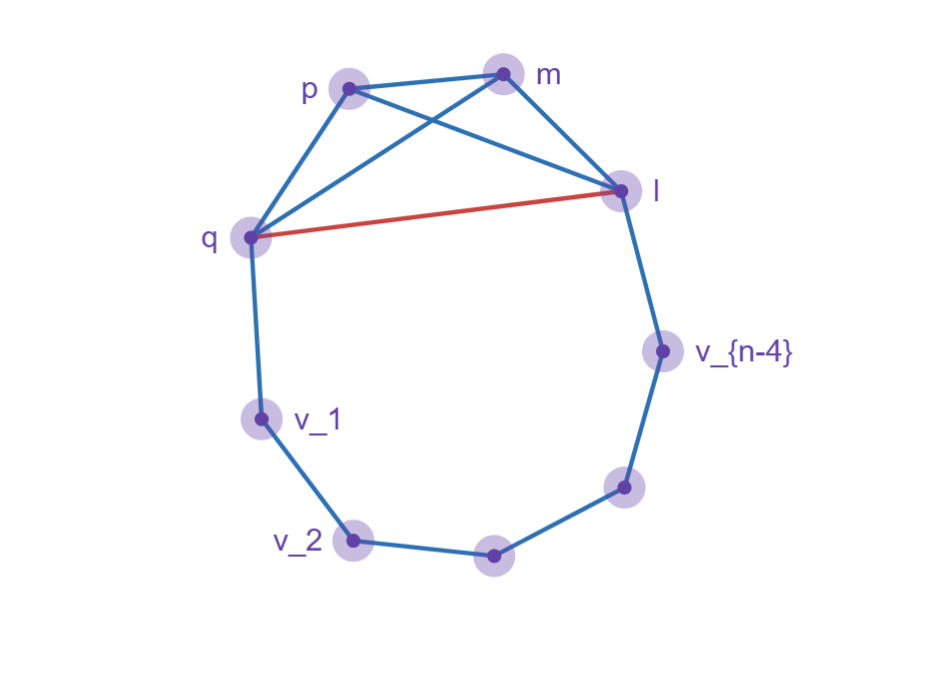}
    \caption{Illustration for the proof of Lemma \ref{cor1}.}
    \label{fig:enter-label}
\end{figure}
\begin{proof}
Let the vertices of $\Sigma$ be labeled $1,2,\dots,n$. Let $\triangle lmq$ and $\triangle lpq$ be two triangles that share an edge $lq$. 
Consider two Hamiltonian cycles 
$lmpqv_1v_2\dots v_{n-4}$ and $lpmqv_1v_2\dots v_{n-4}$. Since we assume that all the Hamiltonian cycles in $\Sigma$ have the same multisign, by the Hourglass Lemma we have
$$\sigma(pq)\sigma(l m)=\sigma(lp)\sigma(mq).$$
This implies 
$$\sigma(l m)\sigma(mq)\sigma(l q)=\sigma(pq)\sigma(lp)\sigma(lq).$$
That is equivalent to $$\sigma(\triangle l mq)=\sigma(\triangle lpq).$$
Hence, these two triangles have the same multisign.

Let $\triangle v_iv_jv_k$ and $\triangle v_rv_sv_t$ be any two triangles in $\Sigma$, for distinct $i,j,k,r,s,t\in \{1,2,\dots ,n\} $. By the result from last paragraph, we have
$$
    \sigma(\triangle v_iv_jv_k)=\sigma(\triangle v_iv_jv_r)=\sigma(\triangle v_iv_rv_s)=\sigma(\triangle v_rv_sv_t).
$$
If two indices are equal, e.g., $i=t$, the argument is the same.  This implies that all the triangles in $\Sigma$ have the same multisign. As required.
\end{proof}

\section{Signs of Hamiltonian Cycles}
The relationship between the signs of Hamiltonian cycles and the signs of triangles is as follows:
\begin{theorem}\label{thm} 
Consider a signed complete  graph $\Sigma_{n,\sigma} = (K_n, \sigma)$, where $n\geq 4$.

(a) All Hamiltonian cycles are negative if and only if all triangles are negative and $n$ is odd, while all Hamiltonian cycles are positive if and only if all triangles are negative and $n$ is even, or all triangles are positive.

(b)  A signed complete graph $\Sigma_{n,\sigma}$ contains both a positive Hamiltonian cycle and a negative Hamiltonian cycle if and only if it contains at least one positive triangle and at least one negative triangle.
\end{theorem}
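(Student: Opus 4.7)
The plan is to deduce both parts from the two main tools already in place: the fan-triangulation identity (Lemma \ref{thm2}) and the Consistent Triangle Multisign Lemma (Lemma \ref{cor1}).

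For part (a), I would first handle the implications from triangle signs to Hamiltonian-cycle signs. Fan-triangulating any Hamiltonian cycle from a fixed vertex yields $n-2$ triangles, and by Lemma \ref{thm2} the sign of the cycle is the product of the signs of these triangles. So if every triangle shares a common sign $\epsilon\in\{+1,-1\}$, every Hamiltonian cycle has sign $\epsilon^{n-2}$, and the three ``if'' directions fall out from a parity check on $n-2$: all triangles positive gives $+1$; all triangles negative gives $(-1)^{n-2}$, which is $+1$ for $n$ even and $-1$ for $n$ odd. For the ``only if'' directions, I would assume that all Hamiltonian cycles share a common sign; Lemma \ref{cor1}, specialized to the signed case, then forces all triangles to share a common sign $\epsilon$, and the common Hamiltonian sign $\epsilon^{n-2}$ pins down the parity of $n$ in the negative case.

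Part (b) will then follow by contraposition. In the forward direction, if both positive and negative Hamiltonian cycles exist, then the triangles cannot all share a common sign, because otherwise the fan-triangulation argument of part (a) would force every Hamiltonian cycle to have the same sign $\epsilon^{n-2}$; hence both positive and negative triangles must occur. In the reverse direction, if both signs of triangles occur then the triangles do not all share a sign, so by the contrapositive of Lemma \ref{cor1} the Hamiltonian cycles cannot all share a sign, and both signs of Hamiltonian cycle therefore occur.

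No step looks like a serious obstacle: the substantive content of the theorem is already encapsulated in Lemmas \ref{thm2} and \ref{cor1}. The main points of care will be the parity bookkeeping of $n-2$ in part (a) and the clean conversion to contrapositives in part (b); everything else is a direct quotation of the preceding lemmas.
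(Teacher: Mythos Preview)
Your argument is correct. The only difference from the paper is organizational: the paper proves the multisigned generalization first (Theorem \ref{mul}) and then obtains Theorem \ref{thm} in one line by setting $m=1$, whereas you argue directly from Lemmas \ref{thm2} and \ref{cor1}. Since the proof of Theorem \ref{mul} is itself exactly the fan-triangulation plus consistent-triangle argument you describe, your write-up is essentially that proof specialized to the signed case; you lose nothing mathematically, and in fact your version is slightly leaner because in the two-element sign group you do not need to invoke Lemma \ref{thm4} separately---the parity computation $\epsilon^{n-2}$ already identifies the common triangle sign.
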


\begin{proof}
We deduce Theorem \ref{thm} from Theorem \ref{mul} by taking 
$m=1$. 
\end{proof}

\begin{theorem}\label{mul}
 Consider a multisigned complete  graph $\Sigma = (K_n, \sigma,G)$, where $n\geq 4$ and $m\geq 1$.   
 
 (a)  Let $g$ be a fixed non-identity element of the multisign group $G$. All Hamiltonian cycles in $\Sigma$ have the same multisign $g$ if and only if all triangles have multisign $g$ and $n$ is odd. Similarly, all Hamiltonian cycles have multisign $e$ if and only if either all triangles have the same non-identity multisign and $n$ is even, or all triangles have multisign $e$. 
 
 (b) The graph $\Sigma$ contains at least two  Hamiltonian cycles with different multisigns if and only if it contains at least two triangles with different multisigns.
 
\end{theorem}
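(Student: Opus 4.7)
The plan is to combine the Consistent Triangle Multisign Lemma (Lemma \ref{cor1}) with the triangle decomposition of a cycle (Lemma \ref{thm2}). The key observation is that $G=\{-1,+1\}^m$ is elementary abelian of exponent $2$, so for any $t\in G$ and any integer $k$, the power $t^k$ depends only on the parity of $k$: it equals $t$ when $k$ is odd and $e$ when $k$ is even. Since Lemma \ref{thm2} fans out chords from a single vertex to decompose any Hamiltonian $n$-cycle into exactly $n-2$ triangles, this parity phenomenon will drive both parts of the theorem.

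For part (b), I would treat the two directions separately. The implication ``two triangles with different multisigns imply two Hamiltonian cycles with different multisigns'' is simply the contrapositive of Lemma \ref{cor1}. For the converse, suppose for contradiction that all triangles share a common multisign $t$; then Lemma \ref{thm2} gives every Hamiltonian cycle multisign $t^{n-2}$, a single fixed element of $G$, so all Hamiltonian cycles would carry the same multisign, contradicting the hypothesis.

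For part (a), Lemma \ref{cor1} reduces the ``same multisign'' hypothesis to the case where all triangles share a common multisign $t$, and Lemma \ref{thm2} then shows every Hamiltonian cycle has multisign $t^{n-2}$. A parity case analysis finishes the argument: when $n$ is odd, $t^{n-2}=t$, so matching against a prescribed non-identity $g$ forces $t=g$ and $n$ odd; when $n$ is even, $t^{n-2}=e$ regardless of $t$, so matching against $g=e$ is automatic and allows any $t$ (non-identity $t$ gives the first disjunct, $t=e$ gives the second, and the odd case $n$ with $t=e$ also gives the second). The converse directions in each case are direct verifications using Lemma \ref{thm2}. I do not anticipate a serious technical obstacle; once Lemmas \ref{cor1} and \ref{thm2} are in hand, the argument is essentially parity bookkeeping in a group of exponent $2$, and the only point requiring care is confirming that the decomposition produces precisely $n-2$ triangles, as this count is what drives the entire $n$ odd versus $n$ even dichotomy.
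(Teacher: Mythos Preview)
Your proposal is correct and follows essentially the same route as the paper: both arguments hinge on Lemma~\ref{cor1} to force a common triangle multisign $t$, then use the fan decomposition of Lemma~\ref{thm2} into exactly $n-2$ triangles to obtain $\sigma(P)=t^{n-2}$ and finish by parity in the exponent-$2$ group. The only cosmetic difference is that the paper deduces part~(b) as an immediate corollary of part~(a), whereas you prove part~(b) directly via the contrapositive of Lemma~\ref{cor1} and the same $t^{n-2}$ computation; the content is identical.
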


\begin{proof}
Proof of part (a).
Let $P$ be any Hamiltonian cycle in $\Sigma$ with vertices $v_1,v_2,...,v_n$. Construct triangles as in Lemma \ref{thm2}. By Lemma \ref{thm2}, we have $$\sigma(P)=\sigma(\triangle v_1v_2v_3)\sigma(\triangle v_1v_3v_4)\dots\sigma(\triangle v_1v_{n-1}v_n).$$  

 If all triangles have the same non-identity multisign $g$ and $n$ is odd, then $\sigma(P)=g^{n-2}=g.$ Thus, all Hamiltonian cycles have multisign $g$. Conversely, assume all Hamiltonian cycles have the same non-identity multisign $g$. By Lemma \ref{cor1}, all triangles have the same multisign. Since the graph is unbalanced, it contains a  triangle with a non-identity multisign $h\in G$ by Lemma  \ref{thm4}, which implies that all triangles have the same multisign $h$.  Let $P$ be any Hamiltonian cycle. From Lemma \ref{thm2}, we have $\sigma(P)=h^{n-2}$. If $n$ is even, this implies $\sigma(P)=e$, which is a contradiction. Thus, $n$ is odd. Now, all Hamiltonian cycles have multisign $g$, so $g=\sigma(P)=h^{n-2} = h$.  Thus, all triangles have multisign $g$.

If all triangles have the same multisign $h$ and either $n$ is even or $h = e$, then $\sigma(P)=h^{n-2} =e.$ Thus, all Hamiltonian cycles have multisign $e$. 
 
Conversely, assume that all Hamiltonian cycles have multisign $e$.  By Lemma \ref{cor1}, all triangles have the same multisign $h$.  
If $\Sigma$ is unbalanced, using reasoning similar to that in the previous part, we conclude that $n$ must be even and $e=\sigma(P)=h^{n-2}$, for some non-identity $h\in G$. This implies that all triangles have the same multisign $h$. On the other hand, if $\Sigma$ is balanced, all triangles have multisign $e$.

Part (b) follows directly from part (a). 
\end{proof}

\section{Acknowledgment}

The author thanks Professor Thomas Zaslavsky for his invaluable suggestions and recommendations during the preparation of this draft.



\begin{thebibliography}{9}

\bibitem{1} Thomas Zaslavsky. Signed graphs. Discrete Appl. Math., 4 (1982), 47--74. MR 84e:05095a. Zbl 476.05080. Erratum, ibid., 5 (1983), 248. MR 84e:05095b. Zbl 503.05060.
\bibitem{2}  Thomas Zaslavsky. Negative (and positive) circles in signed graphs: A problem collection.  AKCE Int. J. Graphs Combinatorics, 15 (2018), no. 1, 31--48. MR 3803228. Zbl 1390.05085.\label{eq:example}

\end{thebibliography}
\end{document}